\definecolor{Chocolat}{rgb}{0.36, 0.2, 0.09}
\definecolor{BleuTresFonce}{rgb}{0.215, 0.215, 0.36}
\definecolor{EgyptianBlue}{rgb}{0.06, 0.2, 0.65}
\DeclareTextSymbolDefault{\cyrsh}{T2A}
\newtheorem*{itheorem}{Theorem}
\newtheorem{theorem}{Theorem}
\newtheorem{lemma}{Lemma}
\newtheorem{corollary}{Corollary}
\newtheorem{proposition}{Proposition}
\theoremstyle{definition}
\DeclareMathAlphabet{\pazocal}{OMS}{zplm}{m}{n}
\def\calM{\pazocal{M}}
\def\calP{\pazocal{P}}
\def\calQ{\pazocal{Q}}
\def\calR{\pazocal{R}}
\def\calX{\pazocal{X}}
\def\calY{\pazocal{Y}}
\def\calZ{\pazocal{Z}}
\DeclareMathAlphabet{\mathbbold}{U}{bbold}{m}{n}
\def\kk{\mathbbold{k}}
\DeclareMathOperator{\id}{id}
\DeclareMathOperator{\Hom}{Hom}
\DeclareMathOperator{\PreLie}{\textsl{PreLie}}
\DeclareMathOperator{\NAP}{\textsl{NAP}}
\DeclareMathOperator{\Perm}{\textsl{Perm}}
\DeclareMathOperator{\Lie}{\textsl{Lie}}
\DeclareMathOperator{\Ass}{\textsl{Ass}}
\DeclareMathOperator{\Com}{\textsl{Com}}
\DeclareMathOperator{\uAss}{\textsl{uAss}}
\DeclareMathOperator{\uCom}{\textsl{uCom}}
\DeclareMathOperator{\RT}{\textsl{RT}}
\DeclareMathOperator{\FM}{\textsl{FMan}}
\DeclareMathOperator{\Poisson}{\textsl{Poisson}}
\newcommand{\ac}{\scriptstyle \text{\,\rm !`}}
\begin{document}

\title{Three Schur functors related to pre-Lie algebras}

\author{Vladimir Dotsenko}
\address{Institut de Recherche Math\'ematique Avanc\'ee, UMR 7501, Universit\'e de Strasbourg et CNRS, 7 rue Ren\'e-Descartes, 67000 Strasbourg CEDEX, France}
\email{vdotsenko@unistra.fr}

\author{Ois\'in Flynn--Connolly}
\address{Universit\'e Sorbonne Paris Nord, Laboratoire de G\'eom\'etrie, Analyse et Applications, LAGA, CNRS, UMR 7539, F-93430, Villetaneuse, France}
\email{flynn-connolly@math.univ-paris13.fr}


\begin{abstract}
We give explicit combinatorial descriptions of three Schur functors arising in the theory of pre-Lie algebras. The first of them leads to a functorial description of the underlying vector space of the universal enveloping pre-Lie algebra of a given Lie algebra, strengthening the PBW theorem of Segal. The two other Schur functors provide functorial descriptions of the underlying vector spaces of the universal multiplicative enveloping algebra and of the module of K\"ahler differentials of a given pre-Lie algebra. An important consequence of such descriptions is an interpretation of the cohomology of a pre-Lie algebra with coefficients in a module as a derived functor for the category of modules over the universal multiplicative enveloping algebra. 
\end{abstract}

\maketitle

\section{Introduction}

In this paper, we establish several new results on pre-Lie algebras. The algebraic structure of a pre-Lie algebra, or a right-symmetric algebra, is formally defined as a vector space $V$ equipped with a binary operation $\triangleleft$ satisfying the identity
 \[
(a_1\triangleleft a_2)\triangleleft a_3 - a_1\triangleleft (a_2\triangleleft a_3)=(a_1\triangleleft a_3)\triangleleft a_2 - a_1\triangleleft (a_3\triangleleft a_2) .
 \]
Any pre-Lie algebra is a Lie algebra with respect to the commutator $[a_1,a_2]:=a_1\triangleleft a_2-a_2\triangleleft a_1$, so one may consider universal enveloping pre-Lie algebras of Lie algebras. They have been first studied by Segal in \cite{Se1994} who was motivated by geometric questions such as the classification problem of left invariant locally flat affine structures on Lie groups; the relevance of pre-Lie algebras for such purposes was discovered by Vinberg \cite{Vin1963}. Segal found a certain basis of nonassociative words for the universal enveloping algebra; his answer does not depend on the Lie algebra structure on $\mathfrak{g}$, and therefore one can say that an analogue of the classical Poincar\'e--Birkhoff--Witt (PBW) theorem holds for universal enveloping pre-Lie algebras of Lie algebras, along the lines of the general approach to PBW theorems proposed by Mikhalev and Shestakov in~\cite{MS2014}. Moreover, Bolgar proved in \cite{MR1384463} that for the natural filtration on $U^{\PreLie}(\mathfrak{g})$, the associated graded pre-Lie algebra is isomorphic to the universal enveloping pre-Lie algebra of the abelian Lie algebra with the same underlying vector space as $\mathfrak{g}$, which brings universal enveloping pre-Lie algebras closer to the context of the category theoretical approach to PBW theorems proposed by the first author and Tamaroff in \cite{DT2019} who proved that a functorial PBW theorem holds in this case using a previous result \cite{Do2019} of the first author. Neither of the abovementioned results, however, leads to an explicit description of the Schur functor of $U^{\PreLie}(\mathfrak{g})$. 
The first part of the paper is concerned with furnishing such a description. 

The classical Poincar\'e--Birkhoff--Witt theorem in the associative case is centered around the space of symmetric tensors $S(\mathfrak{g})$. It is well known that monomials in pre-Lie are rooted trees whose vertices are labelled by generators: implicit in the works of Cayley \cite{Cayley1857}, it has been proved rigorously in \cite{CL2001
}. The main result of this paper gives an explicit description of the Schur functor of the universal enveloping pre-Lie algebra of a Lie algebra in a way that is reminiscent of the classical Poincar\'e--Birkhoff--Witt theorem and at the same time highlights the appealing combinatorics of rooted trees. 
\begin{itheorem}[{Th.~\ref{th:LiePreLie}}]
Let $\mathfrak{g}$ be a Lie algebra. There is a vector space isomorphism
 \[
U^{\PreLie}(\mathfrak{g})\cong \RT_{\ne 1}(S(\mathfrak{g})),
 \]
where $\RT_{\ne 1}$ is the linear species of rooted trees for which no vertex has exactly one child; moreover, these isomorphisms can be chosen in a way that is natural with respect to Lie algebra morphisms. 
\end{itheorem}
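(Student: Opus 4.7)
The plan is to reduce the theorem to a purely combinatorial identity of species, using the functorial PBW theorem of Dotsenko--Tamaroff \cite{DT2019}. That result furnishes a species $\calF$ together with a natural isomorphism $\mathsf{U}_{\PreLie}(\mathfrak{g})\cong\calF(\mathfrak{g})$ in $\mathfrak{g}$, so the theorem amounts to identifying $\calF$ with $\RT_{\ne 1}\circ S$. Evaluating at $\mathfrak{g}=\Lie(V)$ pins $\calF\circ\Lie$ down to $\PreLie\cong\RT$, the second isomorphism being Chapoton--Livernet \cite{CL2001}. Combining this with the classical PBW identity $S\circ\Lie\cong\Ass$ of species, the theorem reduces to establishing
\[
\RT_{\ne 1}\circ\Ass\,\cong\,\RT
\]
as species and then cancelling $\Lie$ on the right via the (virtual) plethystic invertibility available in characteristic zero.

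The combinatorial heart of the argument is a chain decomposition of rooted trees. Given a rooted tree $T$ on a finite set $I$, I would declare two adjacent vertices equivalent when the lower is the \emph{unique} child of the upper, and take the transitive closure; this partitions $I$ into maximal chains, each linearly ordered from top to bottom and therefore corresponding to a nonempty associative word. Contracting each chain to a single vertex yields a rooted tree whose vertices are labelled by elements of $\Ass$, and by maximality no vertex of the contracted tree has exactly one child, so the result lies in $(\RT_{\ne 1}\circ\Ass)(I)$. The inverse expands each associative word into a path of vertices, and both assignments are manifestly natural in $I$. As a sanity check, the generating function $R(x)$ of $\RT_{\ne 1}$ satisfies $R(1+x)=xe^R$, and the substitution $y=x/(1-x)=\Ass(x)$ converts this into the Cayley equation $T=xe^T$ defining $\RT$.

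The main obstacle is not the combinatorics but the cancellation step, namely passing from $\calF\circ\Lie\cong(\RT_{\ne 1}\circ S)\circ\Lie$ to the iso $\calF\cong\RT_{\ne 1}\circ S$. In characteristic zero this is handled by the plethystic invertibility of $\Lie$ (as a virtual species), but a more hands-on alternative, which I would pursue in parallel, is to construct the iso $\mathsf{U}_{\PreLie}(\mathfrak{g})\cong\RT_{\ne 1}(S(\mathfrak{g}))$ directly: send a tree in $\RT_{\ne 1}(S(\mathfrak{g}))$ whose vertex is labelled by a symmetric tensor $x_1\cdots x_k\in S(\mathfrak{g})$ to the symmetrised chain $\frac{1}{k!}\sum_{\sigma\in\mathbb{S}_k}x_{\sigma(1)}\triangleleft(x_{\sigma(2)}\triangleleft\cdots\triangleleft x_{\sigma(k)})$, assemble by pre-Lie grafting, and verify well-definedness and bijectivity using the Chapoton--Livernet model for $\PreLie$ together with the defining relations of $\mathsf{U}_{\PreLie}(\mathfrak{g})$. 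Either route establishes the bijection and its naturality in Lie algebra morphisms simultaneously.
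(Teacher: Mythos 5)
Your proposal is correct, but it reaches the theorem by a genuinely different route than the paper. The paper does not assume freeness of $\PreLie$ as a right $\Lie$-module from the outset: it proves freeness and identifies the generators simultaneously, by computing the homology of the Koszul-type complex $\PreLie\circ_\tau\Lie^{\ac}$ (the homological freeness criterion of Dotsenko--Tamaroff), using a filtration by the length of the \emph{frame} of a rooted tree --- the maximal chain of single-child vertices starting at the root --- and a spectral sequence whose first page reduces to the freeness of $\Ass$ over $\Lie$ with generators $\Com$. You instead take freeness (hence the existence of the generating species $\calF$ with $\mathsf{U}_{\PreLie}(\mathfrak{g})\cong\calF(\mathfrak{g})$) as a black box from \cite{DT2019,Do2019}, and then identify $\calF$ purely combinatorially: the chain-contraction bijection $\RT\cong\RT_{\ne 1}\circ\Ass$ (which is exactly the species-level shadow of the paper's frame decomposition, applied at every vertex rather than only at the root), the classical PBW identity $\Ass\cong\Com\circ\Lie$, and plethystic cancellation of $\Lie$, which is legitimate in characteristic zero since $\Lie$ has linear term $x$ and isomorphism classes of species are detected by characters. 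Your route is shorter and makes the combinatorial identity completely transparent, at the cost of (i) relying on the external freeness result rather than being self-contained, and (ii) producing only an abstract, non-canonical isomorphism of species $\calF\cong\RT_{\ne 1}\circ\Com$, whereas the paper's argument exhibits the generators as the degree-zero homology of an explicit complex --- which is what its later applications (the $F$-manifold corollary and the permutative bar homology formula) actually use. Your parallel ``direct construction'' of the map via symmetrised right-nested products is not needed for the first route and, as sketched, leaves the hard content (well-definedness modulo the relations of $\mathsf{U}_{\PreLie}(\mathfrak{g})$ and bijectivity, i.e.\ essentially Segal's theorem) unverified; I would drop it or treat it only as a consistency check. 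One small notational caveat shared with the paper: throughout, $S$ and $\Com$ must be read as the non-unital (positive-arity) versions, so that vertices are decorated by nonempty symmetric words; and your displayed relation for the generating series is the product $R\cdot(1+x)=xe^{R}$, which is correct but worth writing unambiguously.
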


Our proof uses a modification of the homological criterion of freeness \cite[Prop.~4.1]{DT2019} that allows us to utilise the underlying rooted tree structure, bringing in standard techniques for working with graph complexes \cite{MR3348138}. The species we found has been previously studied by graph theorists who established that it can also be viewed as the species of ``labelled connected $P_4$-free chordal graphs'', see \cite{MR2031002} and the entry A058863 in \cite{oeis} for details. 

This result has some immediate applications, of which we give three: a proof of a similar result for the operad of $F$-manifold algebras \cite{Do2019}, a formula for the permutative bar homology of an associative commutative algebra, and a hint that can hopefully be used to construct a conjectural good triple of operads \cite{Loday2008} $(\calX^c,\PreLie,\Lie)$ that would allow one to prove a Milnor--Moore type theorem for universal enveloping pre-Lie algebras. 

The second part of the paper is concerned with similar results concerning universal multiplicative enveloping algebras and modules of K\"ahler differentials of pre-Lie algebras. In general, for an operad $\calP$ and a $\calP$-algebra $L$, the universal multiplicative enveloping algebra, is the associative algebra $U_{\calP}(V)$ whose category of left modules is equivalent to the category of operadic $L$-modules. For a pre-Lie algebra $L$, this object was studied in \cite{KU2004} by means of noncommutative Gr\"obner bases, and its monomial basis was constructed. In this paper, we use the approach of~\cite{Kh2019} to prove the following functorial version of the description of  universal multiplicative enveloping algebras of pre-Lie algebras:
\begin{itheorem}[{Th.~\ref{th:AsPreLie}}]
Let $L$ be a pre-Lie algebra. There is a vector space isomorphism
 \[
U_{\PreLie}(L)\cong T(L)\otimes S(L).
 \]
Moreover, these isomorphisms can be chosen in a way that is natural with respect to pre-Lie algebra morphisms. 
\end{itheorem}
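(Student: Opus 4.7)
The plan is to follow the framework for associative universal enveloping algebras of algebras over operads developed in \cite{Kh2019}, in which $\mathsf{UA}(V)$ admits a presentation by generators $L_v$ and $R_v$ for $v \in V$, corresponding to the left and right pre-Lie multiplications by $v$, subject to relations arising from the pre-Lie identity. A three-way case analysis of the pre-Lie identity, according to which of the three variables plays the role of a module element, yields exactly two families of defining relations
\[
R_b R_a - R_a R_b = R_{[a,b]} \qquad \text{and} \qquad L_a L_b = L_{a \triangleleft b} + L_a R_b - R_b L_a, \qquad a, b \in V.
\]

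I would then introduce the candidate natural map $\Phi_V : T(V) \otimes S(V) \to \mathsf{UA}(V)$ sending $(a_1 \otimes \cdots \otimes a_n) \otimes (b_1 \cdots b_m)$ to the monomial $L_{a_1} \cdots L_{a_n} R_{b_1} \cdots R_{b_m}$. Rewriting the second relation as $R_b L_a = L_a R_b - L_a L_b + L_{a \triangleleft b}$ shows that every monomial in the $L_v$'s and $R_v$'s can be brought to the above normal form by iteratively moving $R$'s past $L$'s to the right; this procedure terminates because each elementary move either decreases the number of $R$-before-$L$ inversions or decreases the total number of $R$'s. The first relation then shows that the ordering of the trailing $R$'s is immaterial modulo terms with fewer $R$'s, which after iteration produces the $S(V)$ factor. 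Together these observations show that $\Phi_V$ is surjective.

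The injectivity of $\Phi_V$ is the main obstacle. For this, I would adapt the modification of the homological criterion of freeness \cite[Prop.~4.1]{DT2019} used in the proof of Theorem~\ref{th:LiePreLie}, applied now to the Schur functor $V \mapsto T(V)\otimes S(V)$. Verifying the freeness hypothesis allows one to reduce to the case where $V = \PreLie(W)$ is a free pre-Lie algebra, for which the isomorphism can be checked directly by matching the monomials $\Phi_V\bigl((a_1\otimes\cdots\otimes a_n)\otimes(b_1\cdots b_m)\bigr)$ with the basis of $\mathsf{UA}(V)$ constructed by Kozlov and Umirbaev in \cite{KU2004}. Naturality of $\Phi_V$ with respect to pre-Lie morphisms is then immediate from its construction in terms of the generators $L_v$ and $R_v$.
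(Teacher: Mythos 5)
Your route is genuinely different from the paper's. The paper never writes down a presentation of $\mathsf{UA}(V)$: it works one level up, with the right $\PreLie$-module $\partial(\PreLie)$, whose Schur functor is the species $\partial(\RT)$ of rooted trees with one unlabelled vertex; it filters this by the length of the path from the root to the unlabelled vertex and shows the associated graded module is free over $\PreLie$ on the species of ``barebones'' $\Ord\times\Set$, i.e.\ $\uAss\otimes\uCom$, after which the theorem follows from $\mathsf{UA}(V)\cong\partial(\PreLie)\circ_{\PreLie}(V)$. Your presentation by $L_v,R_v$ is correct (the three specialisations of the pre-Lie identity do collapse to the two stated families, since placing the module element in the second or third slot gives the same relation), and your rewriting argument for surjectivity of $\Phi_V$, with termination controlled by the lexicographic pair (number of $R$'s, number of $R$-before-$L$ inversions), is sound; it is the analogue of the easy half of PBW.

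The gap is in the injectivity step. The homological criterion of \cite[Prop.~4.1]{DT2019} is not something one applies ``to the Schur functor $V\mapsto T(V)\otimes S(V)$'': it applies to a right module over an operad, here $\partial(\PreLie)$, and verifying its hypothesis --- that $\partial(\PreLie)$ is free as a right $\PreLie$-module --- is not a device for reducing to free pre-Lie algebras; it is the entire remaining content of the theorem (once freeness is known, \cite[Th.~3.1]{DT2019} gives the natural isomorphism directly, with no detour through $V=\PreLie(W)$). As written, your proposal leaves that verification as a black box, and it is exactly the part that the paper's barebone/filtration argument supplies. If instead you want to lean on the monomial basis of \cite[Th.~1]{KU2004}, you can dispense with the homological criterion altogether: since $\Phi_V$ is a natural surjection, it suffices to know that the monomials $L_{e_{i_1}}\cdots L_{e_{i_n}}R_{e_{j_1}}\cdots R_{e_{j_m}}$ with $j_1\le\cdots\le j_m$ (for $e_i$ running over a basis of $V$) are linearly independent in $\mathsf{UA}(V)$, so that $\Phi_V$ carries a basis to a basis --- but then you must check that the Gr\"obner normal forms of \cite{KU2004} really are these monomials and that the statement there covers arbitrary pre-Lie algebras rather than only free ones. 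Either way, the injectivity half needs to be completed; as it stands it is a pointer to two possible arguments, neither of which is carried out.
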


In \cite{KU2004}, a basis of the module of K\"ahler differentials $\Omega^1_{\PreLie}(L)$ of a given pre-Lie algebra $L$ was also constructed. We prove a new functorial PBW theorem for modules of K\"ahler differentials of algebras over operads and apply it in the case of pre-Lie algebras, obtaining the following result.
\begin{itheorem}[{Th.~\ref{th:KahlerPreLie}}]
Let $L$ be a pre-Lie algebra. There is a vector space isomorphism
 \[
\Omega^1_{\PreLie}(L)\cong \overline{T}(L),
 \]
where $\overline{T}(L)$ is the augmentation ideal of the tensor algebra. Moreover, these isomorphisms can be chosen in a way that is natural with respect to pre-Lie algebra morphisms. 
\end{itheorem}

We then apply these two results to give a new recipe for calculation of the cohomology of a pre-Lie algebra $L$ with coefficients in a module $E$, proving that
 \[
H_{\PreLie}^\bullet(L,E)\cong{\mathrm{Ext}}^\bullet_{U_{\PreLie}(L)}(\Omega^1_{\calP}(L),E).
 \] 

This is a short note, and we do not intend to overload it with excessive recollections. We refer the reader to \cite{LV2012} for relevant information on symmetric operads, Koszul duality, and operadic twisting cochains, and to \cite{BLL1998} for information on combinatorics of species. In particular, following \cite{LV2012}, we refer to analytic endofunctors on the category of vector spaces as Schur functors, and use the notation $\kappa$ for the twisting morphism between an operad and its Koszul dual cooperad. All operads in this paper are defined over a field $\kk$ of characteristic zero, and are assumed weight graded and connected; we denote by $\mathbb{I}$ the trivial operad (linear span of the unit). All chain complexes are homologically graded (with the differential of degree $-1$); we use the notation $s$ to implement shifts of homological degree. When writing down elements of operads, we use Latin letters as placeholders; when working with algebras over operads that carry nontrivial homological degrees, there are extra signs which arise from applying operations to arguments via the usual Koszul sign rule.

\section{Functorial PBW theorems}

We begin with giving a brief recollection of the recent results on functorial Poincar\'e--Birkhoff--Witt theorems, and proving a similar new result on K\"ahler differentials.  

\subsection{Universal enveloping \texorpdfstring{$\calQ$}{Q}-algebras of \texorpdfstring{$\calP$}{P}-algebras}
The universal enveloping algebras of the first kind are defined whenever one is given a morphism of operads 
\[\phi\colon\calP\rightarrow\calQ.\] 
Such a morphism leads to a natural functor $\phi^*$ from the category of $\calQ$-algebras to the category of $\calP$-algebras (pullback of the structure). It is well known \cite{MR0233864,MR0244341} that this functor admits a left adjoint $\phi_!$ which is sometimes called the universal enveloping $\calQ$-algebra of~$L$. Moreover, we shall use the observation \cite{Re1996} that this left adjoint can be computed via the relative composite product formula \[\phi_!(L)=\calQ\circ_\calP L,\] where  $L$ in the latter formula is regarded as a ``constant Schur functor'' (a symmetric sequence supported at arity zero) given by the underlying vector space of the $\calP$-algebra $L$.  

In joint work with Tamaroff \cite{DT2019}, the first author gave a categorical definition of what it means for the datum $(\calP,\calQ,\phi)$ to have the PBW property: by definition, one requires that there exists a Schur functor~$\calX$ such that the underlying object of the universal enveloping $\calQ$-algebra of any $\calP$-algebra~$V$ is isomorphic to the evaluation of $\calX$ on $V$ (given by~$\calX\circ_{\mathbb{I}}V$) naturally with respect to $\calP$-algebra morphisms. According to \cite[Th.~3.1]{DT2019}, the datum $(\calP,\calQ,\phi)$ has the PBW property if and only if the right $\calP$-module action on $\calQ$ via $\phi$ is free; in this case, the Schur functor $\calX$ that generates that right module satisfies the above condition:
 \[\phi_!(V)\cong\calX\circ_{\mathbb{I}}V\] naturally with respect to $\calP$-algebra morphisms.

\subsection{Universal multiplicative enveloping algebras of \texorpdfstring{$\calP$}{P}-algebras}
The universal enveloping algebra of the second kind is defined for any algebra $V$ over an operad~$\calP$: it is an associative algebra $U_{\calP}(V)$ whose category of left modules is equivalent to the category of $V$-modules defined by means of operad theory; in the ``pre-operad'' literature, this object is often referred to as the universal multiplicative enveloping algebra. Let us briefly summarise the relevant background information here, following~\cite{Fr2009,Kh2019}. First, one considers a particular type of $\{1,2\}$-coloured operads, namely those whose structure operations can either have all inputs and the output of colour $1$ or all inputs but one of colour $1$ and the remaining input as well as the output of colour $2$. Such an operad is a pair $(\calQ,\calR)$, where $\calQ$ is a usual operad, and $\calR$ is a right $\calQ$-module in the category of twisted associative algebras, or in other words, a $\Ass$-$\calQ$-bimodule. 

For a usual operad $\calP$, one can consider the derivative $\partial(\calP)$ defined by \[\partial(\calP)(I):=\calP(I\sqcup\{\star\}),\] and define a $\{1,2\}$-coloured Schur functor $(\calP,\partial(\calP))$, where by definition the input $\star$ and the output of $\partial(\calP)$ are of colour $2$. This Schur functor has a $\{1,2\}$-coloured operad structure arising from the operad structure on $\calP$, and algebras $(V,M)$ over this coloured operad are precisely a $\calP$-algebra $V$ and a $V$-module~$M$. As we assume all operads connected, the augmentation $\calP\to\mathbb{I}$ of the operad $\calP$ may be used to make the pair $(\calP,\partial(\mathbb{I}))$ a $\{1,2\}$-coloured operad. The unit $\eta\colon\mathbb{I}\to\calP$ of the operad $\calP$ gives rise to a morphism of two-coloured operads \[\psi\colon (\calP,\partial(\mathbb{I}))\to (\calP,\partial(\calP)),\] and if one denotes by $\kk$ the trivial $V$-module (that is, the module on which all the operations of the augmentation ideal of $\calP$ vanish), we have 
 \[
(V,U_{\calP}(V))\cong (\calP,\partial(\calP))_{(\calP,\partial(\mathbb{I}))}(V,\kk). 
 \]
From this observation and \cite[Th.~3.1]{DT2019}, it immediately follows that a functorial PBW type theorem for universal multiplicative enveloping algebras holds if and only if the Schur functor $\partial(\calP)$ is a free right $\calP$-module; in this case, the Schur functor $\calY$ that generates that right module satisfies $U_{\calP}(A)\cong\calY\circ_{\mathbb{I}}A$ naturally with respect to $\calP$-algebra morphisms.

\subsection{K\"ahler differentials of \texorpdfstring{$\calP$}{P}-algebras}

Mimicking the classical definition from commutative algebra, one may define the $U_{\calP}(L)$-module of K\"ahler differentials $\Omega^1_\calP(L)$ for any given operad $\calP$ and any given $\calP$-algebra $L$ \cite[Sec.~4.4]{Fr2009}. The intrinsic definition states that $\Omega^1_\calP(L)$ is the $U_{\calP}(L)$-module that represents the functor of derivations $\mathrm{Der}(L,E)$ with values in a $U_{\calP}(V)$-module $E$. It is known that this module can be constructed in a very explicit way: it is spanned by formal expressions $p(a_1,\ldots,da_i,\ldots,a_m)$, where $p\in\calP(m)$, and $a_1, \ldots, a_m\in L$, modulo the relations
\begin{multline*}
p(a_1,\ldots,q(a_i,\ldots,a_{i+n-1}),\ldots, da_j,\ldots, a_{m+n-1})=\\
(p\circ_i q)(a_1,\ldots,a_i,\ldots,a_{i+n-1},\ldots, da_j,\ldots, a_{m+n-1}),
\end{multline*}
\begin{multline*}
p(a_1,\ldots,dq(a_i,\ldots,a_{i+n-1}),\ldots, a_{m+n-1})=\\
\sum_{j=i}^{i+n-1}(p\circ_i q)(a_1,\ldots,a_i,\ldots,da_j,\ldots,a_{i+n-1},\ldots, a_{m+n-1}).
\end{multline*}
In these formulas, we rewrite the elements $p(a_1,\ldots,da_i,\ldots,a_m)$ for which one of $a_1$, \ldots, $a_m$ is computed using the $\calP$-algebra structure. However, according to \cite[Sec.~10.3]{Fr2009}, one may also view $q$ in these formulas as the right $\calP$-module action on $p(a_1,\ldots,da_i,\ldots,a_m)$, and observe that, similarly to the universal multiplicative enveloping algebras, there exists a universal right $\calP$-module of K\"ahler differentials $\Omega^1_\calP$ such that
 \[
\Omega^1_\calP(L)\cong \Omega^1_\calP\circ_{\calP} L.
 \]
It is probably prudent to warn the reader that the standard notation is very misleading here: $\Omega^1_\calP(L)$ looks like the evaluation of the Schur functor $\Omega^1_\calP$ on a vector space $L$, given by the composite product $\Omega^1_\calP\circ_{\mathbb{I}} L$, while in reality it is equal to the \emph{relative} composite product $\Omega^1_\calP\circ_{\calP} L$.

We shall say that a PBW theorem holds for the modules of K\"ahler differentials if there exists a Schur functor~$\calZ$ such that the underlying object of $\Omega^1_\calP(L)$ of any $\calP$-algebra~$L$ is isomorphic to the evaluation $\calZ\circ_{\mathbb{I}} L$ naturally with respect to $\calP$-algebra morphisms. 

\begin{proposition}
Given an operad $\calP$, there is a PBW theorem for the modules of K\"ahler differentials if and only if the right $\calP$-module action on $\Omega^1_\calP$ is free; in this case, the Schur functor $\calZ$ that generates that right module satisfies the above condition:
 \[
\Omega^1_\calP(L)\cong\calZ\circ_{\mathbb{I}}L
 \] 
naturally with respect to $\calP$-algebra morphisms.
\end{proposition}

\begin{proof}
This is analogous to \cite[Th.~3.1]{DT2019}. Indeed, if there is a right module isomorphism $\Omega^1_\calP\cong\calZ\circ\calP$, we have
 \[
\Omega^1_\calP(L)\cong \Omega^1_\calP\circ_{\calP} L\cong(\calZ\circ\calP)\circ_{\calP} L\cong \calZ\circ_{\mathbb{I}}L
 \]
naturally with respect to $\calP$-algebra morphisms. On the other hand, if there is a PBW theorem for the modules of K\"ahler differentials, we shall examine the isomorphism $\calZ\circ_{\mathbb{I}}L\cong\Omega^1_\calP(L)$ for a free algebra $L=\calP\circ_{\mathbb{I}}V$, obtaining
 \[
\calZ\circ_{\mathbb{I}}(\calP\circ_{\mathbb{I}}V)\cong \Omega^1_\calP(\calP\circ_{\mathbb{I}}V)\cong \Omega^1_\calP\circ_{\calP}(\calP\circ_{\mathbb{I}}V)\cong \Omega^1_\calP\circ_{\mathbb{I}} V,
 \]
and it remains to note that this is natural in $V$ (since any map from generators of a free algebra induces an algebra morphism), so we have $\calZ\circ\calP\cong \Omega^1_\calP$, as required.
\end{proof}

We note that the PBW property for the modules of K\"ahler differentials is discussed in \cite{MR2775896} where it is erroneously claimed that it follows from the PBW property for universal multiplicative enveloping algebras; this fails in many cases, starting from the operad of commutative associative algebras.

\section{The universal enveloping pre-Lie algebra of a Lie algebra}

\subsection{The Schur functor}
In this section, we prove the first main result of this paper, the functorial version of the Poincar\'e--Birkhoff--Witt theorem for pre-Lie algebras \cite[Th.~2]{Se1994} which gives a precise description of the underlying vector space of the universal enveloping algebra via a combinatorially defined Schur functor.

To achieve that goal, we recall that according to \cite{CL2001
}, the underlying Schur functor of the operad $\PreLie$ is the linear species of rooted trees $\RT$, and the operad structure is defined in a simple combinatorial way: if $T_1\in\RT(I)$, and $T_2\in\RT(J)$, then for $i\in I$, the element $T_1\circ_i T_2$ is given by
 \[
T_1\circ_i T_2=\sum_{f\colon \mathrm{in}(T_1,i)\to \mathrm{vert}(T_2)} T_1\circ_i^f T_2 .
 \] 
Here $\mathrm{in}(T_1,i)$ is the set of incoming edges of the vertex $i$ in $T_1$ and $\mathrm{vert}(T_2)$ is the set of all vertices of $T_2$; the tree $T_1\circ_i^f T_2$ is obtained by replacing the vertex $i$ of the tree $T_1$ by the tree $T_2$, and grafting the subtree corresponding to the input $v$ of $i$ at the vertex $f(v)$ of $T_2$. For example, we have
 \[
\vcenter{
\xymatrix@M=3pt@R=5pt@C=5pt{
*+[o][F-]{1}\ar@{-}[dr] &&  *+[o][F-]{3}\ar@{-}[dl] \\
& *+[o][F-]{2} & 
}}
\ \circ_2 \ 
\vcenter{
\xymatrix@M=3pt@R=5pt@C=5pt{
*+[o][F-]{a}\ar@{-}[d]  \\
*+[o][F-]{c} 
}}=
\vcenter{
\xymatrix@M=3pt@R=5pt@C=5pt{
*+[o][F-]{1}\ar@{-}[dr] &*+[o][F-]{a}\ar@{-}[d] & *+[o][F-]{3}\ar@{-}[dl]\\
& *+[o][F-]{c} & 
}}+
\vcenter{
\xymatrix@M=3pt@R=5pt@C=5pt{
*+[o][F-]{1}\ar@{-}[dr] & & *+[o][F-]{3}\ar@{-}[dl]\\
 &  *+[o][F-]{a}\ar@{-}[d]& \\
& *+[o][F-]{c} & 
}}+
\vcenter{
\xymatrix@M=3pt@R=5pt@C=5pt{
  & *+[o][F-]{3}\ar@{-}[d]\\
*+[o][F-]{1}\ar@{-}[dr] &  *+[o][F-]{a}\ar@{-}[d]& \\
& *+[o][F-]{c} & 
}}+
\vcenter{
\xymatrix@M=3pt@R=5pt@C=5pt{
*+[o][F-]{1}\ar@{-}[d] & & \\
  *+[o][F-]{a}\ar@{-}[d]& *+[o][F-]{3}\ar@{-}[dl]\\
 *+[o][F-]{c} & 
}}\ .
 \]

There exists a morphism of operads $\phi\colon\Lie\to\PreLie$ defined on the only generator of $\Lie$ as 
 \[
\phi([a_1,a_2])= 
\vcenter{
\xymatrix@M=3pt@R=5pt@C=5pt{
*+[o][F-]{2}\ar@{-}[d]  \\
*+[o][F-]{1} 
}}
-
\vcenter{
\xymatrix@M=3pt@R=5pt@C=5pt{
*+[o][F-]{1}\ar@{-}[d]  \\
*+[o][F-]{2} 
}} .
 \]
The universal pre-Lie algebra of a Lie algebra $\mathfrak{g}$, denoted $U^{\PreLie}(\mathfrak{g})$, is nothing but $\phi_!(\mathfrak{g})$.

\begin{theorem}\label{th:LiePreLie}
Let $\mathfrak{g}$ be a Lie algebra. There is a vector space isomorphism
 \[
U^{\PreLie}(\mathfrak{g})\cong \RT_{\ne 1}(S(\mathfrak{g})),
 \]
where $\RT_{\ne 1}$ is the linear species of rooted trees for which no vertex has exactly one child; moreover, these isomorphisms can be chosen in a way that is natural with respect to Lie algebra morphisms. 
\end{theorem}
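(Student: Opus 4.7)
By the functorial PBW criterion recalled in \S 2.1, it suffices to prove that $\PreLie$ is free as a right $\Lie$-module and that its module of generators is isomorphic to the species $\RT_{\ne 1}\circ S$; the natural isomorphism $\mathsf{U}_{\PreLie}(\mathfrak{g})\cong\RT_{\ne 1}(S(\mathfrak{g}))$ will then follow by applying the resulting Schur functor. I would establish freeness via a homological criterion adapted from \cite[Prop.~4.1]{DT2019}: the right $\Lie$-module $\PreLie$ is free if and only if the Koszul-type twisted complex $(\PreLie\circ\Lie^{\ac},d_\iota)$, which computes $\mathrm{Tor}^{\Lie}_*(\PreLie,\mathbb{I})$, has vanishing positive homology; in that case its $H_0$ is precisely the module of generators.

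The first concrete step is to interpret this complex as a graph complex in the spirit of \cite{MR3348138}. An element at arity $I$ is a rooted tree whose vertices are ``thickened'' into pairwise disjoint clusters partitioning $I$, each cluster carrying an antisymmetric Lie-cooperadic decoration, and the differential splits a cluster into two non-empty sub-clusters connected by a newly-inserted tree edge, summed according to the pre-Lie composition formula over the possible attachments of the original children of the split vertex among the two new vertices.

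To compute the homology, I would filter the complex by the \emph{skeleton}: the tree in $\RT_{\ne 1}$ obtained by contracting every maximal unary path of the underlying rooted tree to a single super-vertex, or equivalently by the number of super-vertices. The differential never decreases this number, so the filtration is genuine. On the associated graded only those cluster splittings that do not branch the tree survive, namely those that split a cluster along an existing unary chain, so the $E^0$ page decomposes as a tensor product, over the super-vertices of the skeleton, of a local chain complex that is precisely the Koszul complex $\uAss\circ\Lie^{\ac}$. By the classical Poincar\'e--Birkhoff--Witt theorem, each such local complex has homology concentrated in degree zero and equal to the symmetric algebra species $S$, so the $E^1$ page equals $\RT_{\ne 1}\circ S$ in homological degree zero. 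The spectral sequence collapses for degree reasons, yielding the desired identification of $H_0$; the naturality of the resulting isomorphism with respect to Lie algebra morphisms follows formally from the functoriality of the whole construction.

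The main obstacle I anticipate is the careful handling of the skeleton filtration: splittings that introduce new branches create extra super-vertices, and one must confirm that the filtration index has been chosen so that such moves strictly increase it and are therefore eliminated on the associated graded. A secondary technical point is to verify that the Koszul signs arising from splitting clusters in different relative positions within the tree agree, under the above identification, with those of the local $\uAss$ Koszul complex; this is a bookkeeping exercise using the standard orientation conventions of graph complexes.
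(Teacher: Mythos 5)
Your overall strategy (the homological freeness criterion from \cite[Prop.~4.1]{DT2019}, the Koszul-type complex $\PreLie\circ_\tau\Lie^{\ac}$, and a filtration whose associated graded reduces to the classical PBW fact that $\Ass$ is a free right $\Lie$-module with generators $\Com$) coincides with the paper's, but your filtration by the number of super-vertices does not do what you claim, and this is a genuine gap. The issue is the identification of the associated graded. Consider a vertex $v$ carrying a cluster $S$ and having $k\ge 2$ children. Among the terms of the differential splitting $S=S_1\sqcup S_2$, there are those in which the newly created child vertex (carrying, say, $S_1$) receives exactly one of the $k$ original child subtrees of $v$, while the new root vertex (carrying $S_2$) keeps the other $k-1$ together with the new child. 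The result has a vertex with $k\ge 2$ children followed by a unary vertex, so the number of super-vertices is unchanged and the term survives on the associated graded; yet it is not a splitting ``along an existing unary chain'' --- it transfers the labels $S_1$ from the super-vertex of $v$ to the top of the chain of one of its children. Hence the $E^0$ page does \emph{not} decompose as a tensor product over super-vertices of local complexes $\Ass\circ\Lie^{\ac}$, and your computation of $E^1$ is unjustified. (The difficulty you anticipated concerns the moves that create new branches; those do strictly increase your filtration index. The terms you missed are non-branching moves that nevertheless cross super-vertex boundaries.)

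The paper circumvents exactly this problem by using only the unary chain at the root (the \emph{frame}) and, crucially, by filtering by the \emph{sum} of the size of the frame and the homological degree: since the differential lowers homological degree by one and increases the frame size by zero or one, the associated graded differential is precisely the frame-extending part, which freezes everything below the last vertex of the frame. The homology of the frame part is $\Com$ by classical PBW, and the subtrees grafted at the end of the frame are then handled by induction. If you wish to keep your global skeleton picture, you would need at least a secondary filtration (e.g.\ by the total depth in the skeleton of the labels) to eliminate the label-transferring terms, and you would then have to control the resulting higher differentials; the root-frame-plus-induction argument is the cleaner repair.
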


\begin{proof}
We shall use the following result (the argument below is a slight elaboration of that of \cite[Sec.~17.1.3]{Fr2009}).

\begin{lemma}\label{lm:freeKoszul}
Let $\calP$ be a Koszul operad equipped with a standard weight grading for which generators are of degree one, and let $\calM$ be a weight graded right $\calP$-module. 
There exists a differential making $\calM\circ\calP^{\ac}$ into a chain complex, such that the module $\calM$ is free if and only if the homology of that complex is concentrated in degree zero; whenever that is the case, the homology represents the Schur functor that freely generates $\calM$ as a right $\calP$-module. 
\end{lemma}

\begin{proof}
Let us consider the derived composite product $\calM\circ^\mathbb{L}_\calP\kk$. According to results of \cite[Sec.~15.2]{Fr2009}, we may compute it using any cofibrant replacement of $\calM$ in the model category of right $\calP$-modules or, alternatively, any cofibrant replacement of $\kk$ in the semi-model category of left $\calP$-modules. If we take the minimal free resolution $\calX_\bullet\circ\calP$ of $\calM$ as a cofibrant replacement, we find that the homological degree $p$ part of $\calM\circ^\mathbb{L}_\calP\kk$ is isomorphic to $\calX_p$, since the composite product with $\kk$ kills the minimal differential. On the other hand, if we take the left Koszul complex $\calP\circ_\kappa\calP^{\ac}=(\calP\circ\calP^{\ac},d_\kappa)$ \cite[Sec.~7.4]{LV2012} as a cofibrant replacement of $\kk$, we see that we can compute the derived composite product using the differential $\id\circ_{\calP} d_\kappa$ on $\calM\circ_{\calP}(\calP\circ\calP^{\ac})\cong \calM\circ\calP^{\ac}$. Since a right $\calP$-module is free if and only if its minimal free resolution coincides with itself, the statement follows.
\end{proof}

In the case of the operad $\calP=\Lie$, this complex has a particularly simple description. Indeed, since the Koszul dual operad of $\Lie$ is $\Com$, the Koszul dual cooperad $\Lie^{\ac}$ is isomorphic to the operadic desuspension of the linear dual $\Com^\vee$; thus, for each $n$ the vector space $\Lie^{\ac}(n)$ is the one-dimensional sign representation of symmetric group placed in homological degree $n-1$. We shall denote the only basis element of that vector space by $e_1\wedge e_2\wedge\cdots\wedge e_n$. The differential $d_\kappa$ of the left Koszul complex $\Lie\circ_\kappa\Lie^{\ac}$ has the following description: it is the unique derivation of left $\Lie$-modules sending each element 
$\id\circ(e_1\wedge e_2\wedge\cdots\wedge e_n)$ to
 \[
\sum_{I\sqcup J=\{1,\ldots,n\}}\epsilon_{I,J} [-,-]\circ (e_I\otimes e_J),
 \]  
where $e_I$ and $e_J$ are the basis elements of $\Lie^{\ac}(I)$ and $\Lie^{\ac}(J)$ respectively, and $\epsilon_{I,J}$ is the Koszul sign, which in this case is simply the coefficient of proportionality between $e_I\wedge e_J$ and $e_1\wedge e_2\wedge\cdots\wedge e_n$. 

We wish to apply this construction to $\calM=\PreLie$. Since the underlying Schur functor of the operad $\PreLie$ is the linear species of rooted trees, each component $\PreLie\circ\Lie^{\ac}(n)$ has a basis of rooted trees whose vertices are decorated by elements $e_I$, so that the subsets $I$ form a partition of $\{1,\ldots,n\}$. Since the morphism from $\Lie$ to $\PreLie$ sends the bracket to the difference of two rooted trees of arity two, the description of the composition in the operad $\PreLie$ implies that the differential $\id\circ_{\Lie} d_\kappa$ on $\PreLie\circ\Lie^{\ac}$ is reminiscent of the usual graph complex differential \cite{MR3348138}: it is equal to the sum of all possible ways to split a vertex of a tree into two vertices connected with an edge and to distribute everything (the edges adjacent to that vertex and the wedge factors of its $e_I$ label) in all possible ways between the two new vertices; the sign of this term is the same Koszul sign as the one given in the formula for $d_\kappa$ above. For example, we have
 \[
\id\circ_{\Lie} d_\kappa\left(
\vcenter{
\xymatrix@M=3pt@R=5pt@C=5pt{
*+[o][F-]{3}\ar@{-}[d]  \\
*+[o][F-]{\vphantom{\frac{\frac{100}{100}}{\frac{100}{100}}}e_1\wedge e_2} 
}}\right)=
\vcenter{
\xymatrix@M=3pt@R=5pt@C=5pt{
*+[o][F-]{3}\ar@{-}[d]  \\
*+[o][F-]{\star} 
}}\circ_\star
\left(
\vcenter{
\xymatrix@M=3pt@R=5pt@C=5pt{
*+[o][F-]{2}\ar@{-}[d]  \\
*+[o][F-]{1} 
}}-
\vcenter{
\xymatrix@M=3pt@R=5pt@C=5pt{
*+[o][F-]{1}\ar@{-}[d]  \\
*+[o][F-]{2} 
}}
\right)=
\vcenter{
\xymatrix@M=3pt@R=5pt@C=5pt{
*+[o][F-]{3}\ar@{-}[d] &  \\
  *+[o][F-]{2}\ar@{-}[d]& \\
 *+[o][F-]{1} & 
}}
+
\vcenter{
\xymatrix@M=3pt@R=5pt@C=5pt{
*+[o][F-]{3}\ar@{-}[dr] & & *+[o][F-]{2}\ar@{-}[dl]\\
 &  *+[o][F-]{1}& \\
}}
-
\vcenter{
\xymatrix@M=3pt@R=5pt@C=5pt{
*+[o][F-]{3}\ar@{-}[dr] & & *+[o][F-]{1}\ar@{-}[dl]\\
 &  *+[o][F-]{2}& \\
}}
-
\vcenter{
\xymatrix@M=3pt@R=5pt@C=5pt{
*+[o][F-]{3}\ar@{-}[d] &  \\
*+[o][F-]{1}\ar@{-}[d]& \\
*+[o][F-]{2} & 
}} ,
 \]
where the right hand side is an element of $\PreLie\cong\PreLie\circ\id\subset \PreLie\circ\Lie^{\ac}$. 

To proceed, we define the \emph{frame} of a rooted tree as the longest path starting from the root and consisting of vertices that have exactly one child (the last point of the frame is the first vertex with at least two children or a leaf). Clearly, the frame is unique, since the moment there is ambiguity is the moment one has to stop; for instance, if the root has more than one child, the frame simply coincides with the root. If we examine the above description of the differential, we see that for a basis element $u\in\PreLie\circ\Lie^{\ac}$ whose underlying rooted tree has a frame of length $d$, the element $\id\circ_{\Lie} d_\kappa(u)$ is a sum of basis elements whose frame is of length at most $d+1$; moreover, the only terms with the frame of length $d+1$ are those where we only split vertices of the frame, and in each such case we reconnect all the children to the new vertex that is further from the root. 

Let us define for each $k$ the graded vector space $F^p(\PreLie\circ_\kappa\Lie^{\ac})(n)$ as the span of all decorated trees for which the difference of the length of the frame and the homological degree (that comes from the labels of vertices; as indicated above, the degree of $e_I$ is $|I|-1$) does not exceed $p$. These spaces form an increasing filtration of $\PreLie\circ_\kappa\Lie^{\ac}(n)$; as we just saw, the space $F^p(\PreLie\circ_\kappa\Lie^{\ac})(n)$ is preserved by the differential. For each $n$, the complex $\PreLie\circ_\kappa\Lie^{\ac}(n)$ is finite-dimensional, therefore the spectral sequence associated to our filtration converges to the homology of this complex. 

Let us examine the first page of our spectral sequence. The differential of the associated graded complex is much simpler, since the only terms from $\id\circ_{\Lie} d_\kappa$ that are kept are those where the length of the frame increases by $1$. The frame of a rooted tree can be identified with an element of $\Ass(I)$, where $I$ is the set of vertices of the frame, therefore, the decorated frame of a decorated rooted tree from $\PreLie\circ_\kappa\Lie^{\ac}$ can be identified with an element of $\Ass\circ\Lie^{\ac}(J)$, where $J$ is the union of all subsets $I$ for all decorations $e_I$ of vertices of the frame. Since the associated graded differential only modifies the frame and does not change the rest of the tree, the associated graded chain complex is isomorphic to the direct sum of complexes with the fixed set $J$ decorating the frame, each such complex being a tensor product of the complex $\Ass\circ\Lie^{\ac}(J)$ with a certain graded vector space (a chain complex with zero differential). Moreover, the differential of the complex $\Ass\circ\Lie^{\ac}(J)$ is precisely $\id\circ_{\Lie} d_\kappa$, if we identify $\Ass\circ\Lie^{\ac}$ with $\Ass\circ_{\Lie}(\Lie\circ\Lie^{\ac})$, so the homology of this complex is simply $\Com(J)$ concentrated in degree zero, since the operad $\Ass$ is free as a right $\Lie$-module, and the generators may be identified with the underlying Schur functor of~$\Com$. Thus, computing the homology of the first page of the spectral sequence amounts to collapsing the frame of each tree to a root labelled by an element of $\Com$ concentrated in homological degree zero. Each child of this root is a root of a subtree from $\PreLie\circ_\kappa\Lie^{\ac}(n)$, and each further differential of the spectral sequence is acting on those trees independently, so one may complete the proof by induction on arity, showing that the homology is concentrated in degree zero and is isomorphic to the Schur functor $\RT_{\ne 1}\circ\Com$. As a consequence, the underlying vector space of $U{\PreLie}(\mathfrak{g})$ is naturally isomorphic to
 \[
\PreLie\circ_{\Lie}(\mathfrak{g})\cong (\RT_{\ne 1}\circ\Com\circ\Lie)\circ_{\Lie}(\mathfrak{g})
\cong(\RT_{\ne 1}\circ\Com)(\mathfrak{g})\cong\RT_{\ne 1}(S(\mathfrak{g})) ,
 \]
as required. 
\end{proof}

\subsection{Applications}

In this section, we record several consequences of Theorem~\ref{th:LiePreLie}, and new directions prompted by it. 

\subsubsection{The operad of \texorpdfstring{$F$}{F}-manifold algebras}

Recall \cite{Do2019} that the operad $\FM$ of $F$-manifold algebras is generated by a symmetric binary operation $-\circ-$ and a skew-symmetric binary operation $[-,-]$ satisfying the associativity relation and the Jacobi identity 
\begin{gather*}
(a_1\circ a_2)\circ a_3=a_1\circ(a_2\circ a_3),\\
[[a_1,a_2], a_3]+[[a_2,a_3], a_1]+[[a_3,a_1],a_2]=0,
\end{gather*}
and related to each other by the Hertling--Manin relation \cite{MR1680372}
\begin{multline*}
[a_1\circ a_2,a_3\circ a_4]=
[a_1\circ a_2, a_3]\circ a_4+[a_1\circ a_2, a_4]\circ a_3+a_1\circ [a_2, a_3\circ a_4]+a_2\circ [a_1, a_3\circ a_4]-\\
(a_1\circ a_3)\circ[a_2,a_4]-(a_2\circ a_3)\circ[a_1,a_4]-(a_2\circ a_4)\circ[a_1,a_3]-(a_1\circ a_4)\circ[a_2,a_3] .
\end{multline*}

\begin{corollary}
The operad $\FM$ is a free right $\Lie$-module; the Schur functor of generators is isomorphic to $\RT_{\ne 1}\circ\Com$. 
\end{corollary}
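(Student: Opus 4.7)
The approach is to mirror the proof of Theorem~\ref{th:LiePreLie}, now applying the homological criterion \cite[Prop.~4.1]{DT2019} to the Koszul complex
\[
\mathsf{K}_\bullet(\FM,\Lie) := (\FM\circ_\tau\Lie^{\ac},d)
\]
attached to the morphism of operads $\Lie\to\FM$ coming from the Lie bracket of $\FM$. The goal is to show that the homology of this complex is concentrated in degree zero and naturally isomorphic to $\RT_{\ne 1}\circ\Com$, at which point the homological criterion directly yields the freeness statement together with the identification of the Schur functor of generators.

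The key input I plan to use is the structural result of \cite{Do2019}, which identifies $\FM$ with $\PreLie$ as right $\Lie$-modules via a natural isomorphism of $S$-modules intertwining the canonical morphisms $\Lie\to\PreLie$ and $\Lie\to\FM$. Granting this identification, the complexes $\mathsf{K}_\bullet(\FM,\Lie)$ and $\mathsf{K}_\bullet(\PreLie,\Lie)$ are isomorphic, and the desired conclusion follows immediately from the computation already carried out in the proof of Theorem~\ref{th:LiePreLie}.

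The main obstacle is to verify that the isomorphism of \cite{Do2019} does intertwine the two right $\Lie$-actions: although both $\PreLie$ and $\FM$ carry a canonical Lie bracket, these are defined by a priori different formulas, so the comparison requires some care. Should this compatibility not be made fully explicit in \cite{Do2019}, the backup plan is to rerun the spectral sequence argument of Theorem~\ref{th:LiePreLie} directly on $\mathsf{K}_\bullet(\FM,\Lie)$, using the combinatorial model of $\FM$ provided in \cite{Do2019} to define an analogue of the frame filtration; the collapse of the associated spectral sequence and the identification of the $E_\infty$-page with $\RT_{\ne 1}\circ\Com$ should then proceed exactly as in the earlier argument.
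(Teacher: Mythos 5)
Your primary route rests on a premise that is not available: you take as ``key input'' that \cite{Do2019} identifies $\FM$ with $\PreLie$ \emph{as right $\Lie$-modules}. What \cite{Do2019} actually provides is that $\FM$ is the \emph{associated graded} of $\PreLie$ with respect to a certain filtration of the operad; $\FM$ and $\PreLie$ are not isomorphic as operads (the $F$-manifold operad is a genuine degeneration of the pre-Lie operad), and an associated graded module is not isomorphic to the original module in general. Freeness does not pass from a filtered module to its associated graded (the standard implication goes the other way, from $\gr M$ free to $M$ free), so you cannot transport the computation of $\mathsf{K}_\bullet(\PreLie,\Lie)$ to $\mathsf{K}_\bullet(\FM,\Lie)$ this way. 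A posteriori the two are abstractly isomorphic as right $\Lie$-modules --- both being free on $\RT_{\ne 1}\circ\Com$ --- but that is precisely the statement to be proved, so invoking it as input is circular. You did flag this compatibility as ``the main obstacle,'' which is the right instinct, but the obstacle is fatal to the primary plan rather than a detail to be checked.

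The paper's actual argument is different on both counts. Freeness of $\FM$ as a right $\Lie$-module is obtained not from a Koszul complex but from the shuffle-operad/Gr\"obner basis method: the proof in \cite{Do2019} exhibits an explicit basis of tree monomials for $\FM$, and the shape of these monomials allows one to apply the freeness criterion of \cite[Th.~4(2)]{Do2013}. The Schur functor of generators is then identified by a counting argument: since $\gr$ does not change underlying $S$-modules, $\FM$ and $\PreLie$ have the same underlying Schur functor, and two free right $\Lie$-modules with the same underlying Schur functor have the same Schur functor of generators; the answer $\RT_{\ne 1}\circ\Com$ is then imported from Theorem~\ref{th:LiePreLie}. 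Your backup plan --- rerunning the frame-filtration spectral sequence directly on $\mathsf{K}_\bullet(\FM,\Lie)$ --- is plausible in outline, since the underlying species of $\FM$ is still rooted trees, but it is not carried out: the differential of $\mathsf{K}_\bullet(\FM,\Lie)$ is governed by the degenerated operad structure (the symmetrised product $\circ$ is now associative and commutative on the nose), so the combinatorics of vertex splitting changes and one would have to redo the analysis of the associated graded complex rather than assert that it ``proceeds exactly as in the earlier argument.'' As it stands, the proposal does not constitute a proof.
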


\begin{proof}
The main theorem asserts that the associated graded of the operad $\FM$ with respect to the filtration defined by powers of the ideal generated by $\Lie$ is isomorphic to the operad $\PreLie$. This means that the underlying Schur functors of $\FM$ and $\PreLie$ are isomorphic. Moreover, the proof of the main theorem of \cite{Do2019} shows that shuffle trees that are not divisible by the trees corresponding to the operations
 \[
[a_1\circ a_2,a_3\circ a_4], \quad
[a_1\circ a_3,a_2\circ a_4], \quad
[a_1\circ a_4,a_2\circ a_3]  
 \]  
form a basis of the operad $\FM$. The rest of the proof proceeds similarly to \cite[Th.~4(2)]{Do2013}. First, we see that the shuffle operad $\FM^f$ associated to $\FM$ is a free right module over the shuffle operad $\Lie^f$ associated to $\Lie$: for the generators of that module, one may take all shuffle trees from the basis above satisfying one extra property: no vertex labelled $[-,-]$ has both children being leaves. For the ordered species $\calY$ spanned by such shuffle trees, the natural map from the shuffle composition of $\calY$ and $\Lie^f$ to $\FM^f$ is an isomorphism. Second, we note that freeness of the right module can be encoded by vanishing of homology groups of the appropriate bar complex, and therefore can be checked on the level of associated shuffle operads. 
\end{proof}

We note that our calculation implies that the underlying Schur functor of the operad $\FM$ is $\RT_{\ne 1}\circ\Poisson$. Given that the Hertling-Manin relation may be viewed \cite{MR1680372} as a weakened version of the derivation rule in a Poisson algebra, it would be interesting to give a more direct proof of this result.

\subsubsection{Permutative homology of commutative algebras}

Suppose that $f\colon\calP\to\calQ$ is a map of Koszul operads. Using the Koszul dual map $f^{\ac}\colon\calP^{\ac}\to\calQ^{\ac}$, one makes $\calP^{\ac}$ a right $\calQ^{\ac}$-comodule. In \cite[Th.~3.7(II)]{griffin2014operadic}, Griffin establishes that if that right comodule is cofree with $\calX$ as the Schur functor of cogenerators, then for each $\calQ$-algebra $A$, one has an isomorphism of complexes 
 \[
\mathsf{B}_{\calP}(f^*A)\cong \calX\circ_{\mathbb{I}}\mathsf{B}_{\calQ}(A).
 \]
Here $f^*A$ is the $\calP$-algebra obtained from the algebra $A$ by the pullback of structure via~$f$. We may apply this result in the case where $\calP=\Perm$ is the operad of permutative algebras \cite{MR1860996} (which is the Koszul dual of the operad $\PreLie$), $\calQ=\Com$, and $f\colon\Perm\to\Com$ is the obvious projection (this map is discussed in \cite[Example~3.8]{griffin2014operadic}, but without the knowledge of the Schur functor of cogenerators it is not possible to write down a specific formula). 

\begin{corollary}
Let $A$ be a commutative associative algebra, and let $\mathsf{B}_{\Perm}(A)$ be bar complex of that algebra considered as a permutative algebra. There is an isomorphism of complexes
 \[
\mathsf{B}_{\Perm}(A)\cong\RT_{\ne 1}(\Com(\mathsf{B}_{\Com}(A)) .
 \]
\end{corollary}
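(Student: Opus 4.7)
The approach is a direct invocation of the theorem of Griffin that is recalled in the paragraph preceding the statement. I would set $\calP=\Perm$, $\calQ=\Com$, and take $f\colon\Perm\to\Com$ to be the projection that symmetrises the binary operation. For this choice, the $\Perm$-algebra $f^*A$ is just $A$ equipped with its commutative multiplication regarded as a permutative product, so $\mathsf{B}_\Perm(f^*A) = \mathsf{B}_\Perm(A)$, which is the left-hand side we want to compute.

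The content of the proof is then the verification of the hypothesis of Griffin's theorem: that $\Perm^{\ac}$ is a cofree right comodule over $\Com^{\ac}$. The Koszul duals in question have underlying Schur functors (up to operadic suspension) equal to $\PreLie$ and $\Lie$ respectively. My plan is to reduce cofreeness of this comodule to freeness of $\PreLie$ as a right $\Lie$-module via the standard linear-duality correspondence for connected weight-graded operads and cooperads with finite-dimensional arity components: in this setting a right $\calC$-comodule is cofree with cogenerators $\calX$ if and only if the linear dual right $\calC^*$-module is free with generators $\calX^*$, and the two analytic endofunctors agree up to the relevant operadic suspension. The required freeness of $\PreLie$ as a right $\Lie$-module, together with the identification of the Schur functor of generators as $\RT_{\ne 1}\circ\Com$, is exactly Theorem~\ref{th:LiePreLie}.

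Plugging $\calX=\RT_{\ne 1}\circ\Com$ into the formula $\mathsf{B}_\calP(f^*A)\cong\calX\circ\mathsf{B}_\calQ(A)$ of Griffin immediately delivers
\[
\mathsf{B}_\Perm(A)\cong(\RT_{\ne 1}\circ\Com)\circ\mathsf{B}_\Com(A)=\RT_{\ne 1}(\Com(\mathsf{B}_\Com(A))),
\]
which is the desired isomorphism of complexes; the differential is automatically the correct one since the isomorphism is one of complexes in Griffin's theorem.

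The only genuine subtlety, and where I would focus the bookkeeping, is the translation from ``free right $\Lie$-module with generator space $\RT_{\ne 1}\circ\Com$'' (which is what the main theorem literally provides) to ``cofree right $\Com^{\ac}$-comodule with cogenerator space $\RT_{\ne 1}\circ\Com$'' (which is what Griffin literally needs). One must verify that the operadic suspensions intrinsic to passing between $\Perm,\Com$ and their Koszul duals $\PreLie,\Lie$ cancel out in the Schur functor of (co)generators, so that no shift is introduced on the right-hand side. Everything else is a formal application of previously established results.
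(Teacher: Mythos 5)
Your proposal is correct and follows exactly the route the paper takes (the paper states the corollary as an immediate consequence of Griffin's theorem applied to the projection $\Perm\to\Com$, with the Schur functor of cogenerators supplied by Theorem~\ref{th:LiePreLie}). The duality/suspension bookkeeping you flag is a genuine point that the paper leaves implicit, but it does not change the argument.
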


The K\"unneth formula for the composite product of Schur functors implies the same result on the level of homology. This ultimately leads to a question of a definition of a homotopy invariant notion of the universal enveloping pre-Lie algebra (of a Lie algebra up to homotopy). A general framework for addressing questions like that is discussed in a paper of Khoroshkin and Tamaroff \cite{KT}; it would be interesting to apply their methods in this particular case.

One may also look at the permutative (co)homology with coefficients of a commutative associative algebra $A$; an interesting example is given by the deformation complex of the algebra $A$ in the category of permutative algebras. According to \cite[Th.~4.6]{griffin2014operadic}, for questions like that it is reasonable to use information on the structure of the operad $\PreLie$ as a $\Lie$-bimodule. That seems a much harder question; the corresponding bimodule is certainly not free, and not much is known about it. It would be interesting to construct its minimal resolution by free resolution of $\PreLie$ by free $\Lie$-bimodules. We hope to address this question elsewhere.  

\subsubsection{A conjectural good triple \texorpdfstring{$(\calX^c,\PreLie,\Lie)$}{XPL}} 

In the case of universal enveloping associative algebras of Lie algebras, the celebrated Milnor--Moore theorem \cite{MR174052} states that a connected graded cocommutative bialgebra is the universal enveloping associative algebra of its Lie algebra of primitive elements. Conceptually, that result is a manifestation of the general theory of good triples of operads \cite{Loday2008}: in modern terms, it follows from the fact that there exists a good triple 
 \[
(\Com^c,\Ass,\Lie).
 \] Thus, a direct generalisation of that criterion to the case of universal enveloping pre-Lie algebras would require to find a good triples of operads 
 \[
(\calX^c,\PreLie,\Lie),
 \]
so that each universal enveloping algebra $U^{\PreLie}(\mathfrak{g})$ has a coalgebra structure over the cooperad $\calX^c$ for which $\mathfrak{g}$ is a space of primitive elements. To the best of our knowledge, the question of existence of such good triple remains open; it was asked by Loday about fifteen years ago and is recorded as \cite[Problem~7.3]{Markl2007}. Theorem \ref{th:LiePreLie} offers a useful hint for this question, implying that that the underlying species of the cooperad $\calX$ must be $\RT_{\ne 1}\circ\Com$.

\section{Universal multiplicative envelopes and K\"ahler differentials}

In this section, we prove two other functorial Poincar\'e--Birkhoff--Witt theorems in the context of pre-Lie algebras. The proofs of the two results are quite similar; both of them use combinatorics related to that of the vertebrate species of Joyal \cite{MR633783} and the operad $\NAP$ of nonassociative permutative algebras. Algebras over the operad $\NAP$ have a binary product $\triangleleft$ satisfying the identity $(a_1\triangleleft a_2)\triangleleft a_3=(a_1\triangleleft a_3)\triangleleft a_2$. It is known \cite{Li2006} that the underlying Schur functor of this operad is also the linearisation of the species $\partial(\RT)$ of rooted trees, equipped with the combinatorial substitution given by the single tree $T_1\circ_i^f T_2$ for the unique function $f$ sending every vertex of $\mathrm{in}(T_1,i)$ to the root of $T_2$. 
One may view $\NAP$ as a degeneration of the operad $\PreLie$ in the following way \cite{DF}. If we consider, for each rooted tree, the partial order on its set of vertices induced by it, then for any $i\in I$, and for any trees $T\in\RT(I)$ and $S\in\RT(J)$, the composition $T\circ_i S$ in the operad $\PreLie$ is the sum of all trees whose partial order on $I\circ_i J$ refines the order obtained from the orders on $I$ and on $J$ by identification of the root vertex of $S$ with $i$ and whose restrictions to $I$ and to $J$ coincide with the partial orders prescribed by $S$ and by $T$ respectively. 

\subsection{The universal multiplicative enveloping algebra of a pre-Lie algebra}

The result of this section is a strengthening of the PBW theorem for universal multiplicative envelopes of pre-Lie algebras \cite[Th.~1]{KU2004}. A possibility of such a result is indicated in \cite[Th.~5.4]{Kh2019}; however, the proof of that paper utilises the shuffle operad criterion of freeness \cite[Th.~4]{Do2013}, and therefore no filtrations of algebras can be obtained from filtrations of operads, and no direct conclusion about Schur functors can be made. 

\begin{theorem}\label{th:AsPreLie}
Let $L$ be a pre-Lie algebra. There is a vector space isomorphism
 \[
U_{\PreLie}(L)\cong T(L)\otimes S(L).
 \]
Moreover, these isomorphisms can be chosen in a way that is natural with respect to pre-Lie algebra morphisms. 
\end{theorem}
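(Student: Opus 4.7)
The plan is to follow the template of the proof of Theorem~\ref{th:LiePreLie}: apply the Koszul version of the homological freeness criterion to the right $\PreLie$-module $\partial(\PreLie)$. By the discussion of Section~2.2 and \cite[Th.~3.1]{DT2019}, the theorem amounts to showing that $\partial(\PreLie)$ is free as a right $\PreLie$-module with Schur functor of generators $\calY$ satisfying $\calY(V) \cong T(V) \otimes S(V)$ naturally in $V$. On the level of species this forces $\calY$ to be the Cauchy product $\Ord \cdot \Com$, since the Schur functor of $\Ord$ is the tensor algebra and the Schur functor of $\Com$ is the symmetric algebra.

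Since $\PreLie$ is Koszul (with Koszul dual cooperad $\PreLie^{\ac}$ having the same underlying species as the operad $\Perm$), I would compute the Schur functor of generators as the homology of the chain complex
\[
\mathsf{K}_\bullet(\partial(\PreLie),\PreLie) := (\partial(\PreLie) \circ_\tau \PreLie^{\ac}, d).
\]
The $n$-th component has a basis of rooted trees on vertex set $\{1, \ldots, n\} \sqcup \{\star\}$, where each non-$\star$ vertex carries a $\PreLie^{\ac}$-decoration, i.e.\ it is labelled by a nonempty subset of $\{1, \ldots, n\}$ together with a distinguished element of that subset. The differential is the same graph-complex-type operation as in Theorem~\ref{th:LiePreLie}: split a non-$\star$ vertex into two connected by a new edge, co-split its decoration, and distribute the children of the split vertex according to the rule for pre-Lie composition.

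The decisive modification with respect to Theorem~\ref{th:LiePreLie} is the filtration. Guided by Joyal's vertebrate combinatorics \cite{MR633783}, I would define the \emph{spine} of a decorated tree to be the unique path from the root to the marked vertex $\star$, and filter by (length of spine) $+$ (homological degree): set $F^d$ to be the span of decorated trees for which this sum does not exceed $d$. Each splitting either preserves the spine length or increases it by one, depending on whether the new edge lies off or along the spine, so the differential respects this increasing filtration and only spine-lengthening splittings survive on the associated graded. The computation then reduces, via the resulting spectral sequence, to the homology of the associated graded complex, which I expect to decouple (after a secondary refinement if necessary) into two pieces: a spine part---a linear chain of $\PreLie^{\ac}$-decorated vertices ending at $\star$---whose homology, by the same mechanism that produced $\Com$ out of $\Ass \circ_\tau \Lie^{\ac}$ in the proof of Theorem~\ref{th:LiePreLie}, is concentrated in degree zero and identifies with $\Ord$; and off-spine rooted trees which, by a further inductive argument in the style of Theorem~\ref{th:LiePreLie}, collapse to $\Com$. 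Assembling the spine and off-spine contributions yields the species $\Ord \cdot \Com$ concentrated in degree zero, as required.

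The main obstacle will be to make the decoupling of spine and off-spine data precise. Even the spine-lengthening part of the differential may redistribute the off-spine children of the split vertex between the two new spine vertices, so no naive tensor-product decomposition of the associated graded is available on the nose. Untangling this coupling---either by introducing a secondary filtration by the off-spine multiplicities attached at each spine vertex, or by exhibiting a direct combinatorial bijection identifying the associated graded with a suitable tensor product of a vertebrate-type spine complex and an off-spine pre-Lie complex---is the central technical point, and is precisely where Joyal's vertebrate combinatorics are needed.
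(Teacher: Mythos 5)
Your reduction of the theorem to the freeness of the right $\PreLie$-module $\partial(\PreLie)$, with generators given by linear orders times sets, is the correct frame, but the route you choose --- the Koszul complex $\partial(\PreLie)\circ_\tau\PreLie^{\ac}$ with a spine filtration --- is not the paper's, and the step you yourself flag as the central difficulty genuinely fails as sketched. The analogy with Theorem~\ref{th:LiePreLie} breaks at the first stage: the mechanism that collapsed the frame to $\Com$ there was the freeness of $\Ass$ as a right $\Lie$-module, and nothing of the sort is available over $\PreLie$ (note $\dim\Ass(3)=6<9=\dim\PreLie(3)$, so $\Ass$ is not even free as a right $\PreLie$-module; in any case the spine must \emph{survive} as $\uAss$ rather than collapse). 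More concretely, your associated graded complex does not have homology concentrated in degree zero, so the computation does not ``reduce to the homology of the associated graded''. Already for $n=2$: the degree-one element supported on the tree with root $\star$ and a single child decorated by a pointed copy of $\{1,2\}$ has spine length $0$, and its differential (splitting that child into a chain of two vertices hanging below $\star$) still has spine length $0$; hence it vanishes in the associated graded and survives as a class in $H_1$ of $E^0$. Dually, the spine-length-two piece of $E^0$ is acyclic, so the chains $1\to 2\to\star$, which \emph{are} generators, do not appear on $E^1$ where you expect them; they are only recovered after running $d^1$, which identifies them with branched trees of smaller spine. So the decomposition ``spine gives $\Ord$, off-spine gives $\Com$'' is not what the spectral sequence produces, and closing the argument would require controlling higher differentials. (A smaller point: the generating species must be $\uAss\otimes\uCom$, with both factors allowed to be empty, rather than $\Ord\cdot\Com$ with nonunital $\Com$, if the answer is to be $T(V)\otimes S(V)$ with units.)

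The paper sidesteps all of this by never forming the Koszul complex. It filters the module $\partial(\PreLie)$ itself by the length of the path from the root to $\star$, observes that in the associated graded the insertion at a vertex of that path degenerates to the NAP-type combinatorial insertion, and that this makes $\gr\partial(\PreLie)$ \emph{visibly} free as a right $\PreLie$-module on the species of barebones (the path to $\star$ together with the children of $\star$, i.e.\ $\uAss\otimes\uCom$). Freeness of $\partial(\PreLie)$ then follows by a spectral sequence argument in which the associated graded really does have bar homology concentrated in degree zero, because it is already free. If you wish to pursue your route, you need either to control the higher pages of your spectral sequence or to replace your filtration by one whose associated graded is acyclic outside degree zero; the paper's path-length filtration of the module itself is exactly such a device.
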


\begin{proof}
To analyse the associative universal enveloping algebra $U_{\PreLie}(V)$, one has to consider the right $\PreLie$-module $\partial(\PreLie)$. We note that the corresponding Schur functor is the linearisation of the species $\partial(\RT)$ of rooted trees where all of the vertices but one are labelled. 

To show that the right $\PreLie$-module $\partial(\PreLie)$ is free, we shall first prove the same for the operad $\NAP$. 

\begin{lemma}
The species derivative $\partial(\NAP)$ is a free right $\NAP$-module; specifically, we have a right module isomorphism
 \[
\partial(\NAP)\cong(\uAss\otimes\uCom)\circ\NAP . 
 \]
\end{lemma}

\begin{proof}
This statement is almost obvious. Namely, we realise the Schur functor $\uAss\otimes\uCom$ inside $\partial(\NAP)$ as the linearization of the species of rooted trees for which each vertex on the unique path from the root to the unlabelled vertex different from the starting point and the end point of the path has exactly one child, and each child of the unlabelled vertex has no children of its own. Thus, the vertices on the path from the root to the unlabelled vertex represent $\uAss$, the children of the unlabelled vertex represent $\uCom$, and the way the tree is assembled of those corresponds to computing the tensor product. Substitution in the operad $\NAP$ amounts to grafting trees at all these vertices, and it is true by direct inspection that each rooted tree is obtained from  $\uAss\otimes\uCom$ by the right $\NAP$-module action in the unique way (by forming the subgraph consisting of the path from the root to the unlabelled vertex and substituting the ``missing parts'' into the labelled vertices of this subgraph).  
\end{proof}

Let us show now that we have a right module isomorphism
 \[
\partial(\PreLie)\cong(\uAss\otimes\uCom)\circ\PreLie . 
 \]
Indeed, let us consider the same realization of the Schur functor $\uAss\otimes\uCom$ inside $\partial(\PreLie)$ via the species of rooted trees for which each vertex on the unique path from the root to the unlabelled vertex different from the starting point and the end point of the path has exactly one child, and each child of the unlabelled vertex has no children of its own. Since $\NAP$ is a degeneration of the operad $\PreLie$, an inductive argument handling the lower terms immediately shows that this Schur functor generates $\partial(\PreLie)$ as a right $\PreLie$-module. Similarly, any right $\PreLie$-module dependency has the ``leading term'' corresponding to the operadic compositions in $\NAP$, so a nontrivial $\PreLie$-module dependency necessarily implies a nontrivial $\NAP$-module dependency, which is a contradiction. 

It remains to notice that
\begin{multline*}
U_{\PreLie}(L)\cong\partial(\PreLie)\circ_{\PreLie}L\cong\\ ((\uAss\otimes\uCom)\circ\PreLie)\circ_{\PreLie}L\cong
 (\uAss\otimes\uCom)\circ_{\mathbb{I}}L\cong T(L)\otimes S(L) , 
\end{multline*}
the isomorphism being natural in~$L$, as required. 
\end{proof}

\subsection{The module of K\"ahler differentials of a pre-Lie algebra}

The result of this section is a strengthening of the PBW theorem for modules of K\"ahler differentials of pre-Lie algebras~\cite[Cor.~4]{KU2004}. 

\begin{theorem}\label{th:KahlerPreLie}
Let $L$ be a pre-Lie algebra. There is a vector space isomorphism
 \[
\Omega^1_{\PreLie}(L)\cong \overline{T}(L),
 \]
where $\overline{T}(L)$ is the augmentation ideal of the tensor algebra. Moreover, these isomorphisms can be chosen in a way that is natural with respect to pre-Lie algebra morphisms. 
\end{theorem}

\begin{proof}
As in Theorem \ref{th:AsPreLie}, we shall first establish an appropriate freeness result for the operad $\NAP$. 

\begin{lemma}
The right $\NAP$-module $\Omega^1_{\NAP}$ is free; specifically, we have a right module isomorphism
 \[
\Omega^1_{\NAP}\cong\Ass\circ\NAP . 
 \]
\end{lemma}

\begin{proof}
By definition, the Schur functor $\Omega^1_{\NAP}$ is the linearisation of the species of vertebrates \cite{MR633783}, which encodes labelled trees with two special vertices, the head and the tail. (The head of the vertebrate corresponds to the root, while the tail indicates the vertex where $d$ is applied.) Let us consider, for each vertebrate, its \emph{vertebral column}, defined as the unique path from the head to the tail. Note that the species of vertebral columns is the same as the species of nonempty linear orders, whose linearisation is the underlying Schur functor of the operad $\Ass$. We shall show that that latter Schur functor freely generates $\Omega^1_{\NAP}$ as a right $\NAP$-module. By definition of the module of K\"ahler differentials, operadic compositions at all vertices except for the tail of the vertebral column are the $\NAP$-compositions, and the operadic composition at the tail is slightly more complicated. Specifically, that latter composition corresponds to performing the operadic composition, and then summing up all possible ways to insert $d$ at the vertices of the inserted tree. In other words, if $T_1$ is a vertebrate, $T_2$ is a rooted tree, and $t$ is the tail of $T_1$, one computes the right $\NAP$-action $T_1\circ_t T_2$ by composing $T_1$ and $T_2$ in the operad $\NAP$ and then summing up all ways to make that rooted tree into a vertebrate by choosing a vertex coming from $T_2$. 

We may consider the decreasing filtration of the module $\Omega^1_{\NAP}$ by the length of the vertebral column: $F^p\Omega^1_{\NAP}$ consists of vertebrates with the vertebral column of length at least $p$. We have just seen that the right module action has the terms where the length of the vertebral column is unchanged and some terms where it increases. The associated graded module is manifestly isomorphic to $\Ass\circ\NAP$: each vertebrate is obtained from  $\Ass$ by the right $\NAP$-module action in the unique way (by considering the vertebral column and substituting the ``missing parts'' into the its vertices). We shall now use Lemma \ref{lm:freeKoszul}. For that, we notice that the filtration of the right module $\Omega^1_{\NAP}$ induces a filtration of the complex $\Omega^1_{\NAP}\circ_\kappa\NAP^{\ac}$ for which vanishing of homology in positive degrees is equivalent to freeness (since the operad $\NAP$ is well known to be Koszul). For each $n$, the complex $\Omega^1_{\NAP}\circ_\kappa\NAP^{\ac}(n)$ is finite-dimensional, therefore the spectral sequence associated to the filtration converges to the homology of this complex. Since the associated graded module is free, the homology of the first page of the spectral sequence is concentrated in degree zero, so the same is true for the complex $\Omega^1_{\NAP}\circ_\kappa\NAP^{\ac}$, meaning that the right $\NAP$-module $\Omega^1_{\NAP}$ is free.  
\end{proof}

It follows by an argument analogous to that of in Theorem \ref{th:AsPreLie} that the right $\PreLie$-module $\Omega^1_{\PreLie}$ is free, with the same generators. Finally, we notice that
 \[
\Omega^1_{\PreLie}(L)\cong\Omega^1_{\PreLie}\circ_{\PreLie}L\cong (\Ass\circ\PreLie)\circ_{\PreLie}L\cong
 \Ass\circ_{\mathbb{I}}L\cong \overline{T}(L), 
 \]
the isomorphism being natural in~$L$, as required. 
\end{proof}

\subsection{Application to pre-Lie algebra cohomology}

Let us discuss the meaning of our results in the context of computing cohomology of pre-Lie algebras with coefficients in their modules. We believe that the first explicit definition of cohomology of an algebra $L$ over a Koszul operad $\calP$ generated by binary operations with coefficients in a module $E$ was given in papers of Kimura and Voronov \cite{MR1363061} and Fox and Markl \cite{MR1436921} (their definition works for any quadratic operad, but is meaningful from the homotopy theory point of view only in the Koszul case). This cohomology theory was further studied by 
Balavoine \cite{MR1642086} who in particular gave a more concrete formula for the differential. In modern terms, this cohomology of a $\calP$-algebra $L$ with coefficients in a module $E$ is computed by a complex 
 \[
C^{\bullet}_{\calP}(L,E):=(\Hom(\calP^{\ac}\circ_{\mathbb{I}}L,E),d),
 \]
where the differential $d$ is defined by including that complex in a bigger complex, the operadic deformation complex of the square-free extension algebra $L\oplus E$. In particular, this may be applied to the Koszul operad $\PreLie$ for which $\PreLie^{\ac}\circ_{\mathbb{I}}L\cong L\otimes\Com^\vee(s L)$.

Another definition, this time specific to the case of pre-Lie algebras, is the \emph{ad hoc} definition of cohomology proposed by Dzhumadildaev \cite{MR1698758}. According to that definition, the cohomology is computed using a particular cochain complex constructed by hand; it is given by $\Hom(L\otimes\Lambda(L),E)$ in positive cohomological degrees and a certain space constructed by hand in cohomological degree zero. 
Here $\Lambda(L)$ is the Grassmann algebra of $L$, whose underlying vector space is naturally identified with $\Com^\vee(s L)$, so up to a degree shift by one, the main part of this cochain complex matches that of Balavoine (and the differentials match as well). In \cite{MR1698758}, this cohomology is referred to as a derived functor in the category of $U_{\PreLie}(L)$-modules, though this assertion is never proved (and is unlikely, given that the ``derived functor'' calculation does not seem to use projective resolutions). 

In general, contrary to the intuition coming from studying cohomology of Lie algebras and of associative algebras, cohomology of algebras over operads is not given by the Ext-functor over the universal multiplicative enveloping algebra. We shall now show that it is true in the case of pre-Lie algebras.

\begin{corollary}
For every pre-Lie algebra $L$ and every $L$-module $E$, we have
 \[
H_{\PreLie}^\bullet(L,E)\cong{\mathrm{Ext}}^\bullet_{U_{\PreLie}(L)}(\Omega^1_{\PreLie}(L),E).
 \] 
\end{corollary}

\begin{proof}
We shall apply the general result of Fresse \cite[Th.~17.3.4]{Fr2009} that gives a criterion for the isomorphism 
 \[
H_{\calP}^\bullet(L,E)\cong{\mathrm{Ext}}^\bullet_{U_{\calP}(L)}(\Omega^1_{\calP}(L),E)
 \] 
to hold. This criterion requires that the operad $\calP$ is $\Sigma_*$-cofibrant (which is automatic over a field of zero characteristic) and that $\partial(\calP)$ and $\Omega^1_{\calP}$ are cofibrant as right $\calP$-modules (which is true for free right modules over an operad with zero differential). Since in the proof of Theorem \ref{th:AsPreLie} we established that $\partial(\PreLie)$ is a free right $\PreLie$-module and in the proof of Theorem \ref{th:KahlerPreLie} we established that $\Omega^1_{\PreLie}$ is a free right $\PreLie$-module, the result follows. 
\end{proof}

\section*{Acknowledgements } The authors thank Anton Khoroshkin, Martin Markl and Pedro Tamaroff for several useful comments. Special thanks are due to Frederic Chapoton whose remarks on another project of the first author made us discover Theorem \ref{th:KahlerPreLie}. The substantial revision of the original version of this paper was done during the first author's visit to Centre de Recerca Matem\`atica in Barcelona, and he wishes to thank that institution for hospitality and excellent working conditions. 

\bibliographystyle{alpha} 
\bibliography{LiePreLiePBW.bib} 

\begin{thebibliography}{BLL98}

\bibitem[Bal98]{MR1642086}
David Balavoine.
\newblock Homology and cohomology with coefficients, of an algebra over a
  quadratic operad.
\newblock {\em J. Pure Appl. Algebra}, 132(3):221--258, 1998.

\bibitem[BLL98]{BLL1998}
F.~Bergeron, G.~Labelle, and P.~Leroux.
\newblock {\em Combinatorial species and tree-like structures}, volume~67 of
  {\em Encyclopedia of Mathematics and its Applications}.
\newblock Cambridge University Press, Cambridge, 1998.
\newblock Translated from the 1994 French original by Margaret Readdy, With a
  foreword by Gian-Carlo Rota.

\bibitem[Bol96]{MR1384463}
J.~R. Bolgar.
\newblock Uniqueness theorems for left-symmetric enveloping algebras.
\newblock {\em Math. Proc. Cambridge Philos. Soc.}, 120(2):193--206, 1996.

\bibitem[Cay57]{Cayley1857}
Arthur Cayley.
\newblock On the theory of the analytical forms called trees.
\newblock {\em Philosophical Magazine}, 13:172--176, 1857.

\bibitem[Cha01]{MR1860996}
Fr\'{e}d\'{e}ric Chapoton.
\newblock Un endofoncteur de la cat\'{e}gorie des op\'{e}rades.
\newblock In {\em Dialgebras and related operads}, volume 1763 of {\em Lecture
  Notes in Math.}, pages 105--110. Springer, Berlin, 2001.

\bibitem[CL01]{CL2001}
Fr\'{e}d\'{e}ric Chapoton and Muriel Livernet.
\newblock Pre-{L}ie algebras and the rooted trees operad.
\newblock {\em Internat. Math. Res. Notices}, (8):395--408, 2001.

\bibitem[CW03]{MR2031002}
Robert Castelo and Nick Wormald.
\newblock Enumeration of {$P_4$}-free chordal graphs.
\newblock {\em Graphs Combin.}, 19(4):467--474, 2003.

\bibitem[DF22]{DF}
Vladimir Dotsenko and Lo\"ic Foissy.
\newblock {Enriched pre-Lie operads and freeness theorems}.
\newblock {\em J. Comb. Algebra}, 6(1):23--44, 2022.

\bibitem[Dot13]{Do2013}
Vladimir Dotsenko.
\newblock Freeness theorems for operads via {G}r\"{o}bner bases.
\newblock In {\em O{PERADS} 2009}, volume~26 of {\em S\'{e}min. Congr.}, pages
  61--76. Soc. Math. France, Paris, 2013.

\bibitem[Dot19]{Do2019}
Vladimir Dotsenko.
\newblock Algebraic structures of {$F$}-manifolds via pre-{L}ie algebras.
\newblock {\em Ann. Mat. Pura Appl. (4)}, 198(2):517--527, 2019.

\bibitem[DT21]{DT2019}
Vladimir Dotsenko and Pedro Tamaroff.
\newblock Endofunctors and {P}oincar\'{e}-{B}irkhoff-{W}itt theorems.
\newblock {\em Int. Math. Res. Not. IMRN}, (16):12670--12690, 2021.

\bibitem[Dub68]{MR0233864}
Eduardo Dubuc.
\newblock Adjoint triangles.
\newblock In {\em Reports of the {M}idwest {C}ategory {S}eminar, {II}}, pages
  69--91. Springer, Berlin, 1968.

\bibitem[Dzh99]{MR1698758}
A.~Dzhumadil'daev.
\newblock Cohomologies and deformations of right-symmetric algebras.
\newblock volume~93, pages 836--876. 1999.
\newblock Algebra, 11.

\bibitem[FM97]{MR1436921}
Thomas~F. Fox and Martin Markl.
\newblock Distributive laws, bialgebras, and cohomology.
\newblock In {\em Operads: {P}roceedings of {R}enaissance {C}onferences
  ({H}artford, {CT}/{L}uminy, 1995)}, volume 202 of {\em Contemp. Math.}, pages
  167--205. Amer. Math. Soc., Providence, RI, 1997.

\bibitem[Fre09]{Fr2009}
Benoit Fresse.
\newblock {\em Modules over operads and functors}, volume 1967 of {\em Lecture
  Notes in Mathematics}.
\newblock Springer-Verlag, Berlin, 2009.

\bibitem[Gri14]{griffin2014operadic}
James Griffin.
\newblock Operadic comodules and (co)homology theories.
\newblock {\em arXiv e-prints}, Mar 2014.

\bibitem[HM99]{MR1680372}
C.~Hertling and Yu. Manin.
\newblock Weak {F}robenius manifolds.
\newblock {\em Internat. Math. Res. Notices}, (6):277--286, 1999.

\bibitem[Joy81]{MR633783}
Andr\'{e} Joyal.
\newblock Une th\'{e}orie combinatoire des s\'{e}ries formelles.
\newblock {\em Adv. in Math.}, 42(1):1--82, 1981.

\bibitem[Kho22]{Kh2019}
Anton Khoroshkin.
\newblock P{BW} property for associative universal enveloping algebras over an
  operad.
\newblock {\em Int. Math. Res. Not. IMRN}, (4):3106--3143, 2022.

\bibitem[KT20]{KT}
Anton Khoroshkin and Pedro Tamaroff.
\newblock Derived {P}oincar\'e--{B}irkhoff--{W}itt theorems.
\newblock {\em arXiv e-prints}, Mar 2020.

\bibitem[KU04]{KU2004}
D.~Kh. Kozybaev and U.~U. Umirbaev.
\newblock The {M}agnus embedding for right-symmetric algebras.
\newblock {\em Sibirsk. Mat. Zh.}, 45(3):592--599, 2004.

\bibitem[KV95]{MR1363061}
Takashi Kimura and Alexander~A. Voronov.
\newblock The cohomology of algebras over moduli spaces.
\newblock In {\em The moduli space of curves ({T}exel {I}sland, 1994)}, volume
  129 of {\em Progr. Math.}, pages 305--334. Birkh\"{a}user Boston, Boston, MA,
  1995.

\bibitem[Lin69]{MR0244341}
F.~E.~J. Linton.
\newblock Coequalizers in categories of algebras.
\newblock In {\em Sem. on {T}riples and {C}ategorical {H}omology {T}heory
  ({ETH}, {Z}\"{u}rich, 1966/67)}, pages 75--90. Springer, Berlin, 1969.

\bibitem[Liv06]{Li2006}
Muriel Livernet.
\newblock A rigidity theorem for pre-{L}ie algebras.
\newblock {\em J. Pure Appl. Algebra}, 207(1):1--18, 2006.

\bibitem[Lod08]{Loday2008}
Jean-Louis Loday.
\newblock Generalized bialgebras and triples of operads.
\newblock {\em Ast\'{e}risque}, (320):x+116, 2008.

\bibitem[LV12]{LV2012}
Jean-Louis Loday and Bruno Vallette.
\newblock {\em Algebraic operads}, volume 346 of {\em Grundlehren der
  Mathematischen Wissenschaften [Fundamental Principles of Mathematical
  Sciences]}.
\newblock Springer, Heidelberg, 2012.

\bibitem[Mar07]{Markl2007}
M.~Markl.
\newblock Lie elements in pre-{L}ie algebras, trees and cohomology operations.
\newblock {\em J. Lie Theory}, 17(2):241--261, 2007.

\bibitem[Mil11]{MR2775896}
Joan Mill\`es.
\newblock Andr\'{e}-{Q}uillen cohomology of algebras over an operad.
\newblock {\em Adv. Math.}, 226(6):5120--5164, 2011.

\bibitem[MM65]{MR174052}
John~W. Milnor and John~C. Moore.
\newblock On the structure of {H}opf algebras.
\newblock {\em Ann. of Math. (2)}, 81:211--264, 1965.

\bibitem[MS14]{MS2014}
Alexander~A. Mikhalev and Ivan~P. Shestakov.
\newblock P{BW}-pairs of varieties of linear algebras.
\newblock {\em Comm. Algebra}, 42(2):667--687, 2014.

\bibitem[Rez96]{Re1996}
Charles~W. Rezk.
\newblock {\em Spaces of Algebra Structures and Cohomology of Operads}.
\newblock PhD thesis, MIT,
  https://dspace.mit.edu/bitstream/handle/1721.1/41793/36023226-MIT.pdf, 1996.

\bibitem[Seg94]{Se1994}
Dan Segal.
\newblock Free left-symmetric algebras and an analogue of the
  {P}oincar\'{e}-{B}irkhoff-{W}itt theorem.
\newblock {\em J. Algebra}, 164(3):750--772, 1994.

\bibitem[Slo]{oeis}
N.~J.~A. Sloane{, editor}.
\newblock The {O}n-{L}ine {E}ncyclopedia of {I}nteger {S}equences.
\newblock Published electronically at \url{http://oeis.org/}.

\bibitem[Vin63]{Vin1963}
\`E.~B. Vinberg.
\newblock The theory of homogeneous convex cones.
\newblock {\em Trudy Moskov. Mat. Ob\v{s}\v{c}.}, 12:303--358, 1963.

\bibitem[Wil15]{MR3348138}
Thomas Willwacher.
\newblock M. {K}ontsevich's graph complex and the
  {G}rothendieck-{T}eichm\"{u}ller {L}ie algebra.
\newblock {\em Invent. Math.}, 200(3):671--760, 2015.

\end{thebibliography}

\end{document}